\definecolor{darkblue}{rgb}{0,0,.5}
\definecolor{redblue}{rgb}{.5,0,.2}
\definecolor{greenblue}{rgb}{0,.4,.3}
\definecolor{purpleblue}{rgb}{.4,0,.5}
\definecolor{darkgreen}{rgb}{0,0.4,0}
\definecolor{NavyBlue}{cmyk}{0.94,0.54,0,0}
\definecolor{JungleGreen}{cmyk}{0.99,0,0.52,0}
\definecolor{lightgray}{rgb}{0.9,0.9,0.9}
\newcounter{enucount}
\newtheorem{theorem}[enucount]{Theorem}
\newtheorem{proposition}[enucount]{Proposition}
\newtheorem{example}{Example}
\newtheorem{assumption}{Assumption}
\let\emptyset\varnothing
\newcommand{\PP}{\mathbb{P}}
\newcommand{\EE}{\mathbb{E}}
\newcommand{\ZZ}{\mathbb{Z}}
\newcommand{\RR}{\mathbb{R}}
\newcommand{\VV}{\textup{vert}}
\newcommand{\conv}{\textup{conv}}
\newcommand{\ones}{\textbf{1}}
\newcommand{\zeros}{\textbf{0}}
\newcommand{\diag}{\textrm{diag}}
\newcommand{\ipstd}{(\textrm{IP}_\textrm{std})}
\newcommand{\ipnew}{(\textrm{IP}_\textrm{new})}
\title{On a class of stochastic programs with exponentially many scenarios}
\author{Gustavo Angulo\\
Pontificia Universidad Cat\'olica de Chile\\
gangulo@ing.puc.cl}
\date{April 1, 2020}
\begin{document}
\maketitle
\begin{abstract}
 We consider a class of stochastic programs whose uncertain data has an exponential number of possible outcomes, where scenarios are affinely parametrized by the vertices of a tractable binary polytope. Under these conditions, we propose a novel formulation that introduces a modest number of additional variables and a class of inequalities that can be efficiently separated. Moreover, when the underlying polytope is the unit hypercube, we present an extended formulation of polynomial size that can be solved directly with off--the--shelf optimization software. We assess the advantages and limitations of our formulation through a computational study.
\end{abstract}

\section{Introduction}

In stochastic programming, it is usually assumed that the random data of the problem follows a known distribution $\PP$. When $\PP$ is either continuous or finite with a large number of atoms, sampling methods can be used to approximate the true problem by a model involving a reasonable number of scenarios. But what happens when $\PP$ is ``easy'' to describe and still involves an enormous number of possible outcomes? A natural question to ask is whether we  can solve the true problem without relying on sampling methods. One such an example are models with exponentially many scenarios. For instance, scenarios might be given by all subsets of fixed cardinality drawn from a finite ground set. Work along this line is, however, rather limited. In \cite{feige2007robust}, robust combinatorial problems of this type are studied for which approximation algorithms are developed. Further results are obtained in \cite{khandekar2013two}. Below we present a model that illustrates a potential application of implicit representation of scenarios in the context of chance--constrained problems (CCP) and discuss some relevant features and 
questions.

Consider an optimization problem where the data defining the constraints is uncertain. Suppose that the objective of the decision--maker is to find a solution that is feasible with high probability over the possible realization of the data, while incurring the least possible cost. This problem can be formulated as a CCP of the form
\begin{eqnarray*}
& \min& c^\top x \\
& \textrm{s.t.}&\PP(A(\widetilde\omega)x\geq b(\widetilde\omega))\geq 1-\epsilon\\
&& x\in X,
\end{eqnarray*}
where $X\subseteq\RR^n$ imposes deterministic constraints on the decision vector $x$, $c\in\RR^n$ defines a linear objective function, $(A(\widetilde\omega),b(\widetilde\omega))$ denotes the uncertain data, and $\epsilon>0$ is the failure tolerance.

CCPs with independent constraints were introduced in \cite{charnes1959chance}, while jointly--constrained problems were considered in \cite{miller1965chance}. 
Although CCPs appear in a number of applications, unfortunately they are hard to solve in general due to the nonconvexity of the feasible set.
Sampling methods that provide safe approximations and statistical guarantees include \cite{calafiore2005uncertain,calafiore2006scenario,nemirovski2006convex,nemirovski2006probabilistic,luedtke2008sample}.


We restrict our attention to the case where the linear system has a single constraint having a finite set of possible realizations parametrized by a left--hand--side vector $a\in\mathcal{A}\subseteq\RR^n$. We are asked for a solution vector $x$ that satisfies all the constraints given by $\mathcal{A}$ with the exception of at most $k-1$ constraints. That is, we want to solve
\begin{eqnarray*}
(\textrm{P}_{k-1})& \min& c^\top x \\
& \textrm{s.t.}&a^\top x\geq b \quad \textrm{for all but at most } k-1\ \textrm{vectors } a\in\mathcal{A}\\
&& x\in X.
\end{eqnarray*}
This particular model is known as $(k-1)$--violation linear programming (LP) problem and was introduced in \cite{Roos1994109}. Note that $(\textrm{P}_{k-1})$ can be seen as a CCP with $\PP$ being the uniform distribution over $\mathcal{A}$ and $k-1=\lfloor \epsilon|\mathcal{A}|\rfloor$. 
A standard integer programming (IP) formulation for this problem includes binary variables $z_a$ to decide whether a particular constraint is satisfied or not. It has the form
\begin{eqnarray*}
\ipstd& \min& c^\top x \\
& \textrm{s.t.}&a^\top x+M_az_a\geq b \quad \forall a\in\mathcal{A}\\
&& \sum_{a\in\mathcal{A}}z_a\leq k-1\\
&& x\in X\\
&& z_a \in \{0,1\} \quad \forall a\in\mathcal{A},
\end{eqnarray*}
where $M_a>0$ is a sufficiently large constant such that the constraint $a^\top x+M_a\geq b$ is satisfied for any $x\in X$. In \cite{qiu2014covering} several enhancements over a commercial solver are presented for covering problems of this type.

Now, suppose that $\mathcal{A}$ is extremely large, say exponential in $n$, and thus listing all the variables and constraints in $\ipstd$ is impractical.

\begin{example}\label{ccp_example}
Let $a\in\{0,1\}^n$ represent the set of resources that are available in a given scenario. If at least $p$ of them are available in any scenario, then $\mathcal{A}=\{a\in\{0,1\}^n:\ \sum_{i=1}^n a_i\geq p\}$. Thus $|\mathcal{A}|=\sum_{j=p}^n {n \choose j}$ and in particular $|\mathcal{A}|=2^{n-1}$ for $p=n/2$.
\end{example}

In general, the following issues arise:
\begin{itemize}
 \item The number of variables and constraints in $\ipstd$ is extremely large. Therefore, the natural IP formulation might turn unmanageable due to its size.
 \item The IP formulation makes use of ``big-$M$'' constraints, which are known to lead to weak LP relaxations. Therefore, solving $\ipstd$ might take a considerable amount of time.
 \item Although decomposition methods such as column--and--row generation techniques \cite{muter2013simultaneous,sadykov2013column} could be used to incrementally add variables and constraints, these methods are incompatible with the enhancements in \cite{qiu2014covering} since in the latter the complete formulation must be available a priori. Therefore, decomposition methods of this type are likely to suffer from weak LP relaxations due to ``big--$M$'' constraints.
\end{itemize}

Hence, it is desirable to have at hand alternative formulations that allow for efficient solutions methods. In this work, we make the following contributions towards this end:
\begin{itemize}
 \item We introduce a novel reformulation for $(\textrm{P}_{k-1})$ which does not require binary variables for all scenarios. Under a parametric condition on $\mathcal{A}$, this formulation introduces a modest number of additional variables. Unlike \cite{feige2007robust} and \cite{khandekar2013two}, our setting is rather general and not tailored to specific classes of combinatorial problems.
 \item Under a tractability condition, we show that the constraints in our formulation are easy to separate. Moreover, when the parametrization is given by the unit hypercube, we show how to construct a compact extended IP formulation.
 \item We assess the advantages and limitations of our formulation through a series of computational experiments.
\end{itemize}

The remainder of the paper is organized as follows. In Section~\ref{approach}, we present our assumptions and the general approach. In Section~\ref{formulations}, we derive new formulations for $(\textrm{P}_{k-1})$, and some extensions are discussed in Section~\ref{extensions}. Our computational experiments are reported in Section~\ref{experiments} and concluding remarks in Section~\ref{conclusions}.

\section{The approach}\label{approach}
In many instances of $(\textrm{P}_{k-1})$, the description of the problem, in particular that of $\mathcal{A}$, may be fairly simple and yet lead to a complicated IP formulation. Continuing with Example~\ref{ccp_example}, note that the encoding of $\mathcal{A}$ is rather straightforward, we just need $n$ and $p$, but the IP formulation is very large. Given these observations, we propose to use an implicit formulation to solve problems of this type. To that end, let $\mathcal S$ be the family of $(k-1)$--sets of elements in $\mathcal{A}$. We are interested in solving
\begin{eqnarray*}
& \min& c^\top x \\
& \textrm{s.t.}&a^\top x\geq b \quad \forall a\in\mathcal{A}\setminus S\\
&& x\in X\\
&& S\in\mathcal S,
\end{eqnarray*}
that is, we have to decide on the vector $x$ and the discarded set $S$ at the same time without enumerating all the variables and constraints implied by the set of scenarios $\mathcal{A}$.

Let $\phi(x,S):=\min\{x^\top a:\ a\in\mathcal{A}\setminus S\}$. 
Also, for fixed $x$, let $S(x)\in\mathcal S$ denote the set of realizations of $a$ that yield the $k-1$ smallest values of $x^\top a$. With $v_k(x):=\phi(x,S(x))$ being the $k$--th smallest value of $x^\top a$ over $\mathcal{A}$, the problem is equivalent to
\begin{eqnarray*}
(P_v)& \min& c^\top x \\
& \textrm{s.t.}& v_k(x)\geq b\\
&& x\in X.
\end{eqnarray*}
With this formulation, the question becomes whether $v_k(x)$ can be efficiently computed and/or represented or not. Below we make two assumptions on this regard.

\begin{assumption}\label{parametric}
$\mathcal{A}$ is parametrized by an affine function over the vertices of a polytope. More precisely, there exist a polytope $Q\subseteq\RR^m$, a matrix $A\in\RR^{n\times m}$ and a vector $\bar a\in\RR^n$ such that, for any $a\in\mathcal{A}$, there is a unique $z\in\VV(Q)$ satisfying $a=\bar a+Az$, where $\VV(Q)\subseteq Q$ is the set of vertices of $Q$.
\end{assumption}

Assumption~\ref{parametric} let us encode a large number of scenarios implicitly via an affine transformation over a polytope.

\begin{example}\label{ccp_example2}
Suppose we have $n$ independent assets which can yield gains ($g_j>0$) or losses ($l_j<0$). If $\mathcal A$ is the set of uncertain return vectors $a\in\RR^n$, we have
$$\mathcal{A}=\left\{a\in\RR^n|\ a_j\in\{l_j,g_j\}\ \forall i=1,\ldots,n\right\}$$
$$=\left\{a\in\RR^n|\ \exists z\in\{0,1\}^n:\ a_j=l_j+(g_j-l_j)z_j\ \forall i=1,\ldots,n\right\}$$
$$=\left\{a\in\RR^n|\ \exists z\in\VV([0,1]^n):\ a=l+\diag(g-l)z\right\}.$$
Taking $m=n$, $Q=[0,1]^m$, $\bar a=l$ and $A=\diag(g-l)$, we obtain an affine parametrization of $\mathcal{A}$ as in Assumption~\ref{parametric}.
\end{example}

Given $x\in X$, in order to check if $v_{k}(x)\geq b$ holds, we have to compute the $k$--th smallest value of $x^\top a$ over $a\in\mathcal{A}$, or equivalently, the $k$--th smallest value of $x^\top (\bar a+Az)$ over $z\in\VV(Q)$.  This problem reduces to computing the $k$ best basic solutions to an LP, which is NP--hard in general \cite{angulo2014forbidden}. Therefore, for tractability, we restrict our attention to polytopes for which computing the $k$ best basic solutions with respect to a given linear objective function can be done efficiently. This is the case, for instance, of 0/1 polytopes associated to polynomially--solvable combinatorial problems \cite{Lawler,angulo2014forbidden}.

\begin{assumption}\label{tractable}
$Q$ is a tractable binary polytope. More precisely, $\VV(Q)\subseteq\{0,1\}^m$ and both separation and linear optimization over $Q$ can be done in polynomial time with respect to $m$.
\end{assumption}

Under Assumption~\ref{tractable}, $v_k(x)$ is easy to compute. Unfortunately, this function is nonconvex and nonconcave in general.


Now, let $\sigma_l(x)$ denote the sum of the $l$ smallest values of $x^\top a$ over $a\in\mathcal{A}$, which is a concave function of $x$. We have $v_k(x)=\sigma_k(x)-\sigma_{k-1}(x)$, obtaining
\begin{eqnarray*}
& \min& c^\top x \\
& \textrm{s.t.}& \sigma_{k}(x)-\sigma_{k-1}(x)\geq b\\
&& x\in X.
\end{eqnarray*}

Let $\bar\sigma_k(x):=\frac{1}{k}\sigma_k(x)$ and note that $\bar\sigma_k(x)\leq\ v_k(x)$ holds. Therefore, $\bar\sigma_k(x)\geq b$ is a convex constraint and the more restrictive problem
\begin{eqnarray*}
(P_{\bar\sigma})& \min& c^\top x \\
& \textrm{s.t.}& \bar\sigma_k(x)\geq b\\
&& x\in X
\end{eqnarray*}
provides an upper bound for $(P_v)$. Moreover, observe that having constraints on $v_k(x)$ and $\bar\sigma_k(x)$ is analogous to having constraints on the Value--at--Risk (VaR) and Conditional Value--at--Risk (CVaR) of a random variable, respectively. In fact, the CVaR at level $\alpha\in(0,1)$ is related to the average of a random variable over the $\alpha$ upper tail of its distribution, which is exactly the same idea behind $\bar\sigma_k(x)$ with $k$ playing the role of $\alpha$.

\section{Polynomial and exponential formulations}\label{formulations}

In this section, we address the representation of $\sigma_k(x)$ when $Q$ is a binary polytope, 
which we latter use to represent $v_k(x)$ and reformulate $(\textrm{P}_{k-1})$.

\subsection{Representing $\sigma_k(x)$ with the unit hypercube}

Let us assume that $Q=[0,1]^m$. In this case, computing $\bar\sigma_k(x)$ amounts to finding the average of the $k$ best solutions to $\min\{x^\top (\bar a+Az):\ z\in\{0,1\}^m\}$, which can be formulated as
\begin{eqnarray*}
 \bar\sigma_k(x)=&\min&\frac{1}{k}\sum_{i=1}^k x^\top Az_i+x^\top\bar a\\
 &\textrm{s.t.}&z_i\neq z_j\quad i\neq j\\
 &&z_i\in\{0,1\}^m \quad 1\leq i\leq k.
\end{eqnarray*}
Although linear optimization over binary all--different polytopes can be done in polynomial time \cite{angulo2014forbidden}, no compact linear formulation in $k$ and $m$ is known. On the other hand, given that the objective $x^\top A$ is common to the $k$ vectors $z_i$, we also have
\begin{eqnarray*}
 \bar\sigma_k(x)=&\min&\frac{1}{k}x^\top A y+x^\top\bar a\\
 &\textrm{s.t.}&\sum_{i=1}^kz_i=y\\
 &&z_i\prec z_j\quad i< j\\
 &&z_i\in\{0,1\}^m \quad 1\leq i\leq k,
\end{eqnarray*}
where $\prec$ denotes the usual lexicographical order.
In this case, we can represent the constraint $\bar\sigma_k(x)\geq b$ with polynomially--many additional variables and constraints as follows.

Let $L_{k,m}$ be the set of $k\times m$ binary matrices $z=[z_1,\ldots,z_k]^\top$ with rows $z_1^\top,\ldots,z_k^\top\in\{0,1\}^m$ in increasing lexicographical order, and let $Y_{k,m}:=\{y\in\ZZ_+^m:\ y=\sum_{i=1}^k z_i\ \textrm{for some}\ z\in L_{k,m}\}$. 
Then the constraint $\bar\sigma_k(x)\geq b$ becomes
\begin{equation}\label{con}x^\top A y+kx^\top\bar a \geq kb\quad \forall y\in Y_{k,m}.\end{equation}

Instead of relying on separation to enforce (\ref{con}), we seek a description of $\conv(Y_{k,m})$ of polynomial size with respect to $k$ and $m$ and then apply LP duality to represent (\ref{con}).

In \cite{kaibel2010branched}, an extended formulation for $\conv(L_{k,m})$ with $\mathcal O(k^3m)$ variables and constraints is derived using the framework of branched polyhedral systems. Another formulation of the same size can be directly derived from a dynamic program that solves linear optimization over $L_{k,m}$ \cite{loos2011}. We present a dynamic program with complexity $\mathcal O(k^2m)$ to optimize a linear function over $Y_{k,m}$, from which we derive an extended formulation for $\conv(Y_{k,m})$ with $\mathcal O(k^2m)$ variables and constraints.

\begin{theorem}\label{ef}
 There exist matrices $E$ and $F$, and a vector $g$ such that the system $Ey+Fw=g,\ w\geq 0$ has $\mathcal O(k^2m)$ variables and constraints, and the projection of its solution set onto the $y$--space is equal to $\conv(Y_{k,m})$.
\end{theorem}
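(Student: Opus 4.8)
The plan is to follow the standard paradigm that turns an efficient dynamic program into a compact extended formulation (the same philosophy behind the constructions of \cite{kaibel2010branched,loos2011} cited above). First I would exhibit a layered directed acyclic graph $G=(N,R)$, with source $s$ and sink $t$, whose $s$--$t$ paths encode exactly the elements of $Y_{k,m}$. Reading the canonical lexicographically--sorted matrix one column at a time, the $k$ rows split into consecutive groups sharing a common prefix; since only the column sums matter and distinct groups evolve independently, I would compress the state so as to record essentially the size of the group currently being refined together with the current column index, giving $\mathcal O(km)$ states with $\mathcal O(k)$ admissible splits (hence arcs) out of each, for $\mathcal O(k^2m)$ arcs in total. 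Each arc $r$ encodes, for one column, how many of the group's rows receive a $1$ (the $0$--rows kept above the $1$--rows so that lexicographic order is preserved and the rows stay distinct), and it is tagged with its contribution $d^r\in\ZZ^m$ to the column sums. I would then verify the two halves of the correspondence: (soundness) every $s$--$t$ path $P$ yields $y(P):=\sum_{r\in P}d^r\in Y_{k,m}$, and (completeness) every $y\in Y_{k,m}$ equals $y(P)$ for some path $P$, with the linear objective $c^\top y$ matching the additive arc weight $\sum_{r\in P}c^\top d^r$ along $P$.

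Given such a graph, the extended formulation is immediate. I would introduce one flow variable $w_r\ge 0$ per arc and impose flow conservation at every internal node together with one unit of flow leaving $s$; these are $\mathcal O(k^2m)$ equalities that I collect as $\tilde F w=\tilde g$. Since the node--arc incidence matrix of a DAG is totally unimodular, the flow polytope $W=\{w\ge 0:\ \tilde Fw=\tilde g\}$ is integral and its vertices are precisely the indicator vectors of $s$--$t$ paths. Finally I would link the flows to the $y$--space through the linear map $y=\sum_{r\in R}d^r w_r=:Dw$, i.e. I impose $y-Dw=0$. Stacking $y-Dw=0$ on top of $\tilde Fw=\tilde g$ produces a system of the advertised form $Ey+Fw=g$ with $w\ge 0$ and $\mathcal O(k^2m)$ variables and constraints.

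It then remains to argue that the projection onto the $y$--variables equals $\conv(Y_{k,m})$. Because $W$ is an integral polytope whose vertices are the $s$--$t$ paths $P$, and $D$ is linear, the projection is $D\,W=\conv\{\,Dw:\ w\ \text{a vertex of}\ W\,\}=\conv\{\,y(P):\ P\ \text{an } s\text{--}t\text{ path}\,\}$, which by the soundness/completeness correspondence is exactly $\conv(Y_{k,m})$. Equivalently, optimizing $c^\top y$ over the projection solves, via LP duality and the integrality of $W$, the same problem as the dynamic program for every objective $c$; hence the two polyhedra have the same support function and coincide.

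I expect the main obstacle to be the design and correctness of the dynamic program itself, not the (routine) passage from DAG to flow model. The delicate points are (i) choosing a state descriptor rich enough to enforce simultaneously the distinctness of the rows and the lexicographic ordering, yet coarse enough to keep the graph within $\mathcal O(k^2m)$; and (ii) proving completeness of the path--to--$y$ correspondence, namely that \emph{every} achievable column--sum vector is realized by some path and that no spurious points are introduced in the projection. Controlling state growth — ensuring that splitting a group across a new column does not blow up the number of states, which is where passing from the ordered rows to their mere column sums buys the savings — is precisely what wins the $\mathcal O(k^2m)$ bound and accounts for the claimed improvement over the $\mathcal O(k^3m)$ formulations for $\conv(L_{k,m})$.
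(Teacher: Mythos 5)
Your overall paradigm (turn an efficient dynamic program into a flow-type extended formulation) is the same one the paper follows, but your construction breaks at its very first step: the dynamic program underlying $Y_{k,m}$ is not a shortest-path problem on a digraph, and the DAG you describe does not exist. The recursion that your ``column-by-column, group-by-group'' description produces (and the one the paper uses, \eqref{dp1}--\eqref{dp4}) is: a group of $i$ rows at column $t$ either stays together (all rows get $0$, or all get $1$) or splits into a $0$-group of size $l$ and a $1$-group of size $i-l$, and in the split case \emph{both} subproblems $(t+1,l)$ and $(t+1,i-l)$ must be solved independently and their optimal values \emph{added}. With your state $(t,i)$ this transition connects $\gamma^t_i$ to the two states $\gamma^{t+1}_l$ and $\gamma^{t+1}_{i-l}$ simultaneously --- a hyperarc, not an arc --- and the certificate that $y\in Y_{k,m}$ is a binary tree of groups, not an $s$--$t$ path. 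Already for $k=2$, $m=2$ the point $y=(1,1)$ (rows $01$ and $10$) forces the two singleton groups created in column $1$ to make \emph{different} choices in column $2$, which no single path through a state space indexed by (column, group size) can record; a genuine path encoding would have to carry the whole multiset (or stack) of pending group sizes in the state, which is exactly the exponential blow-up your compression claims to avoid, but the compression is not sound.

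This defect propagates to the second half of your argument: once arcs become hyperarcs, the constraint matrix is no longer the node-arc incidence matrix of a digraph, total unimodularity fails in general, and integrality of the resulting ``hyperflow'' polytope is not automatic. This is precisely why the paper does not argue via TU at all: it writes the DP in LP form \eqref{lp1}--\eqref{lp4}, dualizes to obtain the system \eqref{ef1}--\eqref{ef5}, and then invokes the framework of \cite{martin1990polyhedral}, verifying the consistency and disjointness conditions for the reference sets $I[\gamma^t_i]=\{1,\ldots,i\}\times\{t+1,\ldots,m\}$; it is these conditions (not unimodularity) that certify integrality, and the $y$-variables are recovered by grouping the coefficients of each $d_t$ in the dual objective, as in \eqref{ef6}. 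So the concrete gap in your proposal is the missing hypergraph/branched structure --- the role played by branched polyhedral systems in \cite{kaibel2010branched} --- together with the integrality argument that structure requires; the routine DAG-flow machinery you rely on simply does not apply to this dynamic program.
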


\begin{proof}
Given $d\in\RR^m$, let $\gamma^*:=\min\{d^\top y:\ y \in Y_{k,m}\}=\min\{d^\top\sum_{i=1}^k z_i:\ [z_1,\ldots,z_k]^\top \in L_{k,m}\}$. For $1\leq t\leq m$, let $d_{[t]}:=(d_t,\ldots,d_m)\in\RR^{m-t+1}$, and for $1\leq i\leq k$, let $\gamma^t_i:=\min\{d_{[t]}^\top y:\ y\in Y_{i,m-t+1}\}$. With these definitions, we present a dynamic program to compute $\gamma^*=\gamma^1_k$. In what follows, $\zeros$ and $\ones$ represent vectors of 0's and 1's of appropriate size, respectively.

Let $y\in Y_{k,m}$ be an optimal solution defining $\gamma^1_k$, and let $z\in L_{k,m}$ be such that $y=\sum_{i=1}^k z_i$. 
Observe that the first column of $z$ is either equal to $\zeros$ or $\ones$, or it splits in a sequence of 0's followed by a sequence of 1's. In the former case, $z=[\zeros\ \bar z]$ or $z=[\ones\ \bar z]$ for some $\bar z\in L_{k,m-1}$, and thus $y=(0,\bar y)$ or $y=(k,\bar y)$ for some $\bar y\in Y_{k,m-1}$. Therefore, if $y$ is of this form, it will have objective value equal to $0+\gamma^2_k$ or $kd_1+\gamma^2_k$, whichever is the least, with $\gamma^2_k$ given by an optimal choice of $\bar y$.
In the latter case, if $k>1$, the split ensures that the first group of rows $[\zeros\ z']$ are lexicographically smaller than the second group $[\ones\ z'']$, where $z'\in L_{l,m-1}$ and $z''\in L_{k-l,m-1}$ for some $1\leq l<k$, and thus $y=(0,y')+(k-l,y'')=(k-l,y'+y'')$ for some $y'\in Y_{l,m-1}$ and $y''\in Y_{k-l,m-1}$. Therefore, if $y$ is of this form, it will have objective value equal to the least of $0+\gamma^2_l+(k-l)d_1+\gamma^2_{k-l}$ over $1\leq l<k$, with $\gamma^2_l$ and $\gamma^2_{k-l}$ given by optimal choices of $y'$ and $y''$, respectively.
This idea applies recursively to $\gamma^t_i$ as long as $1\leq t<m$. For $t=m$, we have $L_{1,1}=\{[0],[1]\}$ and $Y_{1,1}=\{(0),(1)\}$, $L_{2,1}=\{[0\ 1]^\top\}$ and $Y_{2,1}=\{(1)\}$, and $Y_{i,1}=\emptyset=L_{i,1}$ for $i>2$, yielding $\gamma^m_1=\min\{0,d_m\}$, $\gamma^m_2=0+d_m$ and $\gamma^m_i=+\infty$ for $i>2$, respectively.

The above construction leads to the dynamic programming formulation \eqref{dp1}--\eqref{dp4} below to compute $\gamma^*=\gamma^1_k$:
\begin{subequations}
\begin{itemize}
 \item For $1\leq t<m$ and $1<i\leq k$:
 \begin{equation}\label{dp1}\gamma^t_i=\min\left\{0+\gamma^{t+1}_i,\ id_t+\gamma^{t+1}_i,\ \min_{1\leq l< i}\left\{0+\gamma^{t+1}_l+(i-l)d_t+\gamma^{t+1}_{i-l}\right\}\right\}\end{equation}
 \item For $1\leq t<m$ and $i=1$:
 \begin{equation}\gamma^t_1=\min\left\{0+\gamma^{t+1}_1,\ d_t+\gamma^{t+1}_1\right\}\end{equation}
 \item For $t=m$:
 \begin{equation}\gamma^m_1=\min\{0,d_m\}\end{equation}
 \begin{equation}\gamma^m_2=0+d_m\end{equation}
 \begin{equation}\label{dp4}\gamma^m_i=+\infty\quad \textrm{for}\ i>2.\end{equation}
\end{itemize}
\end{subequations}

Following the approach of \cite{martin1990polyhedral}, in order to find an extended formulation for $\conv(Y_{k,m})$, we define a hypergraph $H$ representing the dynamic program \eqref{dp1}--\eqref{dp4}. The state space $N$ is identified with variables $\gamma^t_i$. In order to define the hyperarcs in $H$, we first cast \eqref{dp1}--\eqref{dp4} in LP form, yielding the system \eqref{lp1}--\eqref{lp4} below:

\begin{subequations}
\begin{itemize}
 \item For $1\leq t<m$ and $1<i\leq k$:
 \begin{equation}\label{lp1}\gamma^t_i\leq\left\{\begin{array}{ccc}
 \displaystyle \gamma^{t+1}_i && (p^t_i)\\
 \displaystyle id_t+\gamma^{t+1}_i && (q^t_i)\\
 \displaystyle \gamma^{t+1}_l+(i-l)d_t+\gamma^{t+1}_{i-l}&\textrm{for}\ 1\leq l< i & (r^t_{il})\end{array}\right.\end{equation}
 \item For $1\leq t<m$ and $i=1$:
 \begin{equation}\label{lp2}\gamma^t_1\leq\left\{\begin{array}{cc}
 \displaystyle \gamma^{t+1}_1 & (p^t_1)\\
 \displaystyle d_t+\gamma^{t+1}_1 & (q^t_1)\end{array}\right.\end{equation}
 \item For $t=m$:
 \begin{equation}\label{lp3}\gamma^m_1\leq\left\{\begin{array}{cc}
 \displaystyle 0 & (p^m_1)\\
 \displaystyle d_m & (q^m_1) \end{array}\right.\end{equation}
 \begin{equation}\label{lp4}\gamma^m_2\leq d_m \quad (r^m_{21}),\end{equation}
\end{itemize}
\end{subequations}
where the objective is to maximize $\gamma^1_k$. For each linear constraint in \eqref{lp1}--\eqref{lp2}, we include in $H$ an hyperarc of the form $(\Gamma,\gamma^t_i)$, where $\Gamma\subseteq\{\gamma^{t+1}_1,\ldots,\gamma^{t+1}_i\}$ is the set of variables in the right-hand side, along with hyperarcs $(\emptyset,\gamma^m_1)$ and $(\emptyset,\gamma^m_2)$ from the boundary constraints \eqref{lp3}--\eqref{lp4}. In this manner, $H$ is acyclic.

From \eqref{lp1}--\eqref{lp4}, by LP duality, we obtain $p,q,r\geq0$ such that
\begin{subequations}
\begin{itemize}
 \item For $1<i\leq k$:
 \begin{equation} \label{ef1} p^t_i-p^{t-1}_i+q^t_i-q^{t-1}_i+\sum_{1\leq l<i}r^t_{il}-\sum_{i<j\leq k}r^{t-1}_{ji}-\sum_{i<j\leq k}r^{t-1}_{j(j-i)}=0\quad \textrm{for}\ 1<t<m\end{equation}
 \begin{equation} p^1_i+q^1_i+\sum_{1\leq l<i}r^1_{il}=\left\{\begin{array}{rcl}0&\textrm{if}&1<i<k\\1&\textrm{if}&i=k\end{array}\right.\quad \textrm{for}\  t=1\end{equation}
 \begin{equation}-p^{m-1}_i-q^{m-1}_i-\sum_{i<j\leq k}r^{m-1}_{ji}-\sum_{i<j\leq k}r^{m-1}_{j(j-i)}=\left\{\begin{array}{rcl}-r^m_{21}&\textrm{if}&2=i\\0&\textrm{if}&2<i\leq k\end{array}\right.\quad \textrm{for}\ t=m\end{equation}
 \item For $i=1$:
 \begin{equation} p^t_1-p^{t-1}_1+q^t_1-q^{t-1}_1-\sum_{1<j\leq k}r^{t-1}_{j1}-\sum_{1<j\leq k}r^{t-1}_{j(j-1)}=0\quad \textrm{for}\ 1<t\leq m\end{equation}
 \begin{equation} \label{ef5} p^1_1+q^1_1=0\quad \textrm{for}\ t=1.\end{equation}
\end{itemize}
To complete the approach in \cite{martin1990polyhedral}, we define $I=\{1,\ldots,k\}\times\{1,\ldots,m\}$ and tag each state $\gamma^t_i\in N$ with reference set $I[\gamma^t_i]=\{1,\ldots,i\}\times\{t+1,\ldots,m\}\subseteq I$. These sets fulfill the consistency and disjointness conditions of \cite{martin1990polyhedral}, namely, for all $(\Gamma,\gamma^t_i)\in H$, we have $I[\gamma^{t+1}_l]\subseteq I[\gamma^t_i]$ for all $\gamma^t_l\in\Gamma$ and $I[\gamma^{t+1}_l]\cap I[\gamma^{t+1}_{l'}]=\emptyset$ for all distinct $\gamma^{t+1}_l,\gamma^{t+1}_{l'}\in\Gamma$.
These conditions imply that the polyhedron \eqref{ef1}--\eqref{ef5} is integral with binary vertices only.

The objective function of the dual problem is to minimize
$$\sum_{1\leq t <m}\left(\sum_{1\leq i\leq k}id_tq^t_i+\sum_{1\leq l< i\leq k}(i-l)d_tr^t_{il}\right)+d_mq^m_1+d_mr^m_{21}.$$
Grouping terms with respect to each $d_t$, we find that $y$ is given by
\begin{equation} \label{ef6} y_t=\left\{\begin{array}{cc}
\displaystyle \sum_{1\leq i\leq k}iq^t_i+\sum_{1\leq l< i\leq k}(i-l)r^t_{il}&\textrm{for}\ 1\leq t<m\\
\displaystyle q^m_1+r^m_{21} &\textrm{for}\ t=m.
\end{array}\right.\end{equation}
\end{subequations}

Taking $w:=(p,q,r)$ and writing (\ref{ef1})--(\ref{ef6}) as $Ey+Fw=g,\ w\geq 0$, we obtain the result as \eqref{ef6} preserves integrality.\qed
\end{proof}

From Theorem~\ref{ef}, we can assume that we have at hand a system of polynomial size such that
$$k\bar\sigma_k(x)=\min\{x^\top Ay:\ Ey+Fw=g,\ w\geq 0\}+kx^\top\bar a.$$
By LP duality, we have
$$k\bar\sigma_k(x)=\max\{g^\top\pi:\ E^\top \pi=A^\top x,\ F^\top\pi\leq0\}+kx^\top\bar a.$$
Therefore, $\bar\sigma_k(x)\geq b$ can be compactly represented by the system
\begin{eqnarray*}
&&g^\top\pi+k\bar a^\top x\geq kb\\
&& E^\top \pi=A^\top x\\
&&F^\top\pi\leq0.
\end{eqnarray*}

\subsection{Representing $\sigma_k(x)$ with binary polytopes}

Given a binary polytope $Q\subseteq\RR^m$, we have that $Q\cap\{0,1\}^m$ is precisely the set of vertices of $Q$, and thus
\begin{eqnarray*}
 \bar\sigma_k(x)=&\min&\frac{1}{k}\sum_{i=1}^k x^\top Az_i+x^\top\bar a\\
 &\textrm{s.t.}&z_i\neq z_j\quad i\neq j\\
 &&z_i\in Q\cap\{0,1\}^m \quad 1\leq i\leq k.
\end{eqnarray*}

Let $L_{k,Q}:=L_{k,m}\cap Q^k$ and $Y_{k,Q}:=\left\{y\in\ZZ_+^m:\ y=\sum_{i=1}^k z_i\ \textrm{for some}\ [z_1,\ldots,z_k]^\top\in L_{k,Q}\right\}$. Then, along the same lines of the case of the unit hypercube, $(P_{\bar\sigma})$ can be cast as
\begin{eqnarray}
\nonumber & \min& c^\top x \\
\label{conQ} & \textrm{s.t.}& x^\top A y+k\bar a^\top x\geq kb \quad \forall y\in Y_{k,Q}\\
\nonumber && x\in X.
\end{eqnarray}
Althought it might not be possible to represent (\ref{conQ}) in a compact manner, these inequalities can be efficiently separated, provided that $Q$ is tractable.
\begin{proposition}
 Under Assumption~\ref{tractable}, inequalities (\ref{conQ}) can be separated in polynomial time.
\end{proposition}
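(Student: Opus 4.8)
The plan is to reduce the separation of inequalities~(\ref{conQ}) to the computation of the $k$ best vertices of $Q$ with respect to a suitable linear objective, and then to invoke the tractability granted by Assumption~\ref{tractable}. Fix a candidate point $x$ (together with the data $A$, $\bar a$, $b$, $k$). Since the term $k\bar a^\top x$ and the right-hand side $kb$ are independent of $y$, the point $x$ violates some inequality in~(\ref{conQ}) if and only if the most binding one is violated, that is, if and only if $\min\{x^\top A y:\ y\in Y_{k,Q}\}+k\bar a^\top x<kb$. Hence separation amounts to solving the inner problem $\min\{x^\top A y:\ y\in Y_{k,Q}\}$ and comparing its value against $kb-k\bar a^\top x$: if the minimizer $y^*$ yields a value below this threshold, then $y^*$ indexes a (most) violated inequality, and otherwise all inequalities hold.

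Next I would rewrite this inner problem in terms of the vertices of $Q$. Writing $d:=A^\top x$, we have $x^\top A y=d^\top y$, and for any $y=\sum_{i=1}^k z_i\in Y_{k,Q}$ arising from $[z_1,\ldots,z_k]^\top\in L_{k,Q}$ one gets $d^\top y=\sum_{i=1}^k d^\top z_i$. Because this quantity depends only on the unordered collection $\{z_1,\ldots,z_k\}$ and not on its lexicographic arrangement---the ordering in the definition of $L_{k,Q}$ serving only to provide a canonical, distinctness-enforcing representative---minimizing $d^\top y$ over $Y_{k,Q}$ is exactly the problem of choosing $k$ distinct vertices $z_1,\ldots,z_k\in\VV(Q)$ that minimize $\sum_{i=1}^k d^\top z_i$. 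The optimum is attained by the $k$ vertices of smallest objective value $d^\top z$, i.e.\ by the $k$ best basic solutions of $\min\{d^\top z:\ z\in Q\}$.

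It then remains to compute these $k$ best vertices efficiently. Under Assumption~\ref{tractable}, linear optimization over $Q$ can be carried out in polynomial time, and by Lawler's ranking procedure \cite{Lawler} (see also \cite{angulo2014forbidden}) the $k$ best vertices can be enumerated using only polynomially many (in $k$ and $m$) calls to a routine that optimizes a linear objective over $Q$ subject to fixing a subset of the coordinates $z_j$ to $0$ or $1$. Each such restricted subproblem reduces to plain linear optimization over $Q$ by perturbing the objective with sufficiently large coefficients on the fixed coordinates (forcing $z_j=1$ with a large negative weight and $z_j=0$ with a large positive one); an infeasible fixing is detected whenever the returned optimum fails to respect it. Consequently the full ranking, and therefore the separation routine, runs in time polynomial in $k$ and $m$.

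The main obstacle is precisely this last reduction: Assumption~\ref{tractable} only grants an oracle for linear optimization over $Q$ itself, whereas Lawler's scheme partitions the solution set by fixing coordinates, so one must argue that optimization over the resulting faces (or the detection of their infeasibility) can be handled through the unrestricted oracle without loss of polynomiality. Secondary points to dispatch are the correct treatment of ties among the objective values $d^\top z$ (any tie-breaking still yields $k$ distinct minimizers) and the degenerate case $|\VV(Q)|<k$, in which $Y_{k,Q}=\emptyset$ and the family~(\ref{conQ}) is vacuous.
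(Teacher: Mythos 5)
Your proposal is correct and follows essentially the same route as the paper's proof: reduce separation to solving $\min\{x^\top A y:\ y\in Y_{k,Q}\}$, observe that this equals choosing the $k$ best basic solutions of $\min\{x^\top A z:\ z\in Q\}$, and invoke tractability via Lawler's ranking procedure \cite{Lawler,angulo2014forbidden}. The ``main obstacle'' you flag (implementing coordinate-fixing with only an unrestricted optimization oracle) is not a gap in the paper's argument---it is precisely what the cited references handle---and in any case your big-$M$ perturbation correctly resolves it for binary polytopes.
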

\begin{proof}
 To separate (\ref{conQ}) given $x\in X$, we must solve $\min\left\{x^\top A y:\ y\in Y_{k,Q}\right\}$, which is equivalent to $\min\left\{x^\top A \sum_{i=1}^k z_i:\ [z_1,\ldots,z_k]^\top \in L_{k,Q}\right\}$. The latter reduces to computing the $k$ best basic solutions to $\min\left\{x^\top A z:\ z\in Q\right\}$, which can be done in polynomial time \cite{Lawler,angulo2014forbidden}.\qed
\end{proof}

%

\subsection{Representing $v_k(x)$ and completing the formulation}

In order to represent the constraint $v_k(x)\geq b$, we write it as $\sigma_k(x)-b\geq \sigma_{k-1}(x)$ and consider any feasible solution to the problem defining the right-hand side. Since $\sigma_{k-1}(x)=\min\{\sum_{a\in S}a^\top x:\ S\in\mathcal S\}$, we have $\sigma_{k-1}(x)\leq x^\top A\sum_{i=1}^{k-1} z_i+(k-1)\bar a^\top x$ for any $ z\in L_{k-1,m}$, or equivalently, $\sigma_{k-1}(x)\leq x^\top A y+(k-1)\bar a^\top x$ for any $ y\in Y_{k-1,m}$. We thus arrive at the system
 \begin{eqnarray*}
&&\sigma_{k}(x)-b\geq x^\top A y + (k-1)\bar a^\top x\\
&& y\in Y_{k-1,m},
\end{eqnarray*}
which further leads to
 \begin{eqnarray*}
&&\min\left\{x^\top Aw:\ w\in Y_{k,m}\right\}+k\bar a^\top x-b\geq  x^\top A y + (k-1)\bar a^\top x\\
&& y\in Y_{k-1,m}.
\end{eqnarray*}
In particular, if $Q=[0,1]^m$, from Theorem~\ref{ef} we obtain the equivalent formulation
 \begin{eqnarray*}
\ipnew& \min& c^\top x \\
& \textrm{s.t.}& g^\top\pi+\bar a^\top x\geq b + x^\top A y\\
&& E^\top \pi=A^\top x\\
&&F^\top\pi\leq0\\
&& y\in Y_{k-1,m}\\
&& x\in X.
\end{eqnarray*}
for $(P_v)$. For general binary $Q$, our formulation reads
 \begin{eqnarray*}
& \min& c^\top x \\
& \textrm{s.t.}& w^\top A^\top x+\bar a^\top x\geq b + x^\top A y\quad \forall w\in Y_{k,Q}\\
&&  y\in Y_{k-1,Q}\\
&& x\in X,
\end{eqnarray*}
for which we must resort on separation.

If $X\subseteq\{0,1\}^n$, after linearizing $x^\top Ay$, we obtain an IP formulation for $(P_v)$ without additional binary variables for each scenario in $\mathcal{A}$.

\section{Extensions}\label{extensions}

\subsection{Joint constraints}
Consider the case where we have multiple constraints that must be satisfied at the same time. Whenever one or more of them are violated, we consider it as a failure. The problem to solve is
\begin{eqnarray*}
& \min& c^\top x \\
& \textrm{s.t.}&\left\{a_q^\top x\geq b_q,\ 1\leq q\leq r\right\} \quad \textrm{for all but at most } k-1\ \textrm{matrices}\ [a_1,\ldots,a_r]^\top \in\mathcal{A}\\
&& x\in X,
\end{eqnarray*}
or equivalently,
\begin{eqnarray*}
& \min& c^\top x \\
& \textrm{s.t.}&\min_{1\leq q\leq r}\left\{a_q^\top x-b_q\right\}\geq 0 \quad \textrm{for all but at most } k-1\ \textrm{matrices}\ [a_1,\ldots,a_r]^\top \in\mathcal{A}\\
&& x\in X.
\end{eqnarray*}

Following the approach presented in the previous section, we assume that $a_q=\bar a_q+A_qz$ for $z\in\VV(Q)$. Note that the vector $z$ is the same across all $a_q$, modeling possible dependencies. 
Now the problem reads
\begin{eqnarray*}
& \min& c^\top x \\
& \textrm{s.t.}&\min_{1\leq q\leq r}\left\{x^\top(\bar a_q+A_qz)-b_q\right\}\geq 0 \quad \textrm{for all but at most } k-1\ \textrm{vectors}\ z\in\VV(Q)\\
&& x\in X.
\end{eqnarray*}
Given $x\in X$, the problem $\min_{z\in Q}\min_{1\leq q\leq r}\left\{x^\top(\bar a_q+A_qz)-b_q\right\}$ has an optimal solution at a vertex of $Q$ since $\varphi(x,z):=\min_{1\leq q\leq r}\left\{x^\top(\bar a_q+A_qz)-b_q\right\}$ is a concave function of $z$. Let $v_k(x)$ be the $k$--th smallest value of $\varphi(x,z)$ over $z\in\VV(Q)$, $\sigma_k(x)$ the sum of the values of its $k$ best basic solutions, and $\bar\sigma_k(x)$ their average.

Assuming $Q\subseteq\RR^m$ is a binary polytope, and recalling $L_{k,Q}=L_{k,m}\cap Q^k$, we have
\begin{eqnarray*}
 \bar\sigma_k(x)=&\min&\frac{1}{k}\sum_{i=1}^k \min_{1\leq q\leq r}\left\{x^\top A_qz_i+x^\top\bar a_q-b_q\right\}\\
 &\textrm{s.t.}&z=[z_1,\ldots,z_k]^\top\in L_{k,Q}.
\end{eqnarray*}
Therefore, we have that $\bar\sigma_k(x)\geq 0$ is equivalent to
\begin{equation}
\label{joint}\sum_{i=1}^k \min_{1\leq q\leq r}\left\{x^\top A_qz_i+x^\top\bar a_q-b_q\right\}\geq 0\quad \forall[z_1,\ldots,z_k]^\top\in L_{k,Q}.
\end{equation}

\begin{proposition}\label{jointsep}
 Under Assumption~\ref{tractable}, inequalities (\ref{joint}) can be separated in polynomial time.
\end{proposition}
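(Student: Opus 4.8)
The plan is to reduce the separation of the joint inequalities~(\ref{joint}) to a single optimization problem over $L_{k,Q}$ and then to a $k$-best-solutions computation, exactly as in the single-constraint case. Given a candidate $x \in X$, the inequality~(\ref{joint}) holds for all $[z_1,\ldots,z_k]^\top \in L_{k,Q}$ if and only if the minimum of the left-hand side over all such matrices is nonnegative. So the first step is to observe that separation amounts to solving
\begin{equation*}
\mu(x) := \min\left\{\sum_{i=1}^k \varphi(x,z_i) :\ [z_1,\ldots,z_k]^\top \in L_{k,Q}\right\},
\end{equation*}
where $\varphi(x,z) = \min_{1\leq q\leq r}\{x^\top A_q z + x^\top \bar a_q - b_q\}$, and reporting the associated matrix as a violated inequality whenever $\mu(x) < 0$.

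The key simplification is that the objective $\sum_{i=1}^k \varphi(x,z_i)$ is \emph{separable} across the rows $z_i$, and the only coupling between rows comes from the lexicographic ordering constraint implicit in $L_{k,Q}$. Since the ordering merely forces the $z_i$ to be distinct (up to relabeling, minimizing a symmetric separable objective over distinct vectors is unaffected by which order we impose), the problem is equivalent to selecting the $k$ distinct vertices $z \in \VV(Q)$ that yield the $k$ smallest values of $\varphi(x,z)$. Thus the second step is to reduce $\mu(x)$ to computing the $k$ best basic solutions of
\begin{equation*}
\min\{\varphi(x,z) :\ z \in Q\},
\end{equation*}
and I would invoke the concavity of $\varphi(x,\cdot)$ (noted in the text) to guarantee that each of these $k$ best solutions is attained at a vertex of $Q$, so they are genuinely elements of $\VV(Q) = Q \cap \{0,1\}^m$.

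The one genuine obstacle is that $\varphi(x,z)$ is a pointwise minimum of $r$ linear functions, hence concave but not linear, so the $k$-best-basic-solutions machinery from~\cite{Lawler,angulo2014forbidden} does not apply verbatim. My plan to handle this is to branch on the index $q$ achieving the inner minimum: for each fixed $q$, the function $z \mapsto x^\top A_q z + x^\top \bar a_q - b_q$ is linear, and linear optimization over $Q$ is polynomial by Assumption~\ref{tractable}. Concretely, I would either (i) enumerate the $r$ linear objectives, compute the $k$ best vertices for each via the tractable $k$-best oracle, and then merge the resulting $r \cdot k$ candidate values, keeping the $k$ smallest distinct vertices by their true $\varphi$-value; or (ii) argue that since $r$ is a fixed part of the input and the $k$-best-solutions procedure runs in polynomial time, the combined procedure is polynomial in $m$, $k$, and $r$. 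The bookkeeping in the merge step—ensuring distinctness of the chosen vertices and correctly evaluating $\varphi$ rather than the single-$q$ surrogate at each candidate—is where I expect the argument to require the most care, but it is routine once the reduction to $r$ linear subproblems is in place.
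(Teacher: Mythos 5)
Your proposal is correct, and it reaches the same reduction as the paper --- find the $k$ distinct vertices of $Q$ realizing the $k$ smallest values of $\varphi(x,\cdot)$, then reorder them lexicographically to exhibit a candidate element of $L_{k,Q}$ --- but the algorithmic step differs. The paper runs a greedy loop: it maintains the set $Z_{i-1}$ of vertices already selected and, at step $i$, solves $\min\left\{x^\top A_q z + x^\top \bar a_q - b_q:\ z\in\VV(Q)\setminus Z_{i-1}\right\}$ for each $q$ and takes the best of the $r$ answers as $z^*_i$. Since interchanging the two minimizations is harmless, $z^*_i$ minimizes $\varphi(x,\cdot)$ over $\VV(Q)\setminus Z_{i-1}$, so correctness of the greedy selection is immediate; the decomposition over $q$ happens \emph{inside} each step, with the exclusion set shared by all $q$. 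You instead compute, for each $q$, the $k$ best vertices for the linear objective $f_q(z) := x^\top A_q z + x^\top \bar a_q - b_q$, pool the at most $rk$ candidates, and keep the $k$ smallest by true $\varphi$-value. This also works, but the merge step you defer as routine bookkeeping is the real mathematical content of your route and needs a swap argument: among optimal $k$-selections pick $S^*$ maximizing its overlap with the pool $V^*$; if some $u\in S^*\setminus V^*$ has witness $q$ (i.e.\ $\varphi(x,u)=f_q(u)$), then $u$ lies outside the $k$-best list $W_q$, so every $w\in W_q$ satisfies $\varphi(x,w)\leq f_q(w)\leq f_q(u)=\varphi(x,u)$; since $|W_q|=|S^*|=k$ and $u\in S^*\setminus W_q$, there exists $w\in W_q\setminus S^*$, and exchanging $u$ for $w$ preserves optimality while increasing the overlap, a contradiction. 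Hence the pool contains an optimal selection and your merge is sound. As for what each approach buys: the paper's interleaved greedy gets correctness for free but invokes the forbidden-vertex oracle (optimization over $\VV(Q)\setminus Z$) explicitly at every step, whereas yours treats the $k$-best routine as a black box per linear piece, which is more modular, at the price of the exchange lemma. One small correction: your appeal to the concavity of $\varphi(x,\cdot)$ is unnecessary, since the scenarios are indexed by $\VV(Q)$ and the minimization is over vertices by definition; concavity matters in the paper only to relate minimization over $Q$ to minimization over $\VV(Q)$.
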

\begin{proof}
 Let $x\in X$ and define $Z_0:=\emptyset$. For $i=1,\ldots,k$, let $Z_i:=Z_{i-1}\cup\{z^*_i\}\subseteq\{0,1\}^m$, where $z^*_i$ is an optimal solution to 
 $$\min\left\{\min_{1\leq q\leq r}\left\{x^\top A_qz_i+x^\top\bar a_q-b_q\right\}:\ z_i\in \VV(Q)\setminus Z_{i-1}\right\}.$$
 Since linear optimization over $\VV(Q)\setminus Z_{i-1}$ can be done in polynomial time \cite{angulo2014forbidden}, we can find $z^*_i$ efficiently by solving 
 $$\min\left\{x^\top A_qz_i+x^\top\bar a_q-b_q:\ z_i\in \VV(Q)\setminus Z_{i-1}\right\}$$
 for each $1\leq q\leq r$ and taking $z^*_i$ as an optimal solution to the problem with the least objective value.
 Finally, order and relabel the elements in $Z_k$ so that $z^*=[z^*_1,\ldots,z^*_k]^\top\in L_{k,Q}$ and check whether $x$ satisfies (\ref{joint}) for $z^*$ or not.\qed
\end{proof}


In this setting, we have that $v_k(x)\geq 0$ is equivalent to $\sigma_k(x)\geq \sigma_{k-1}(x)$, which can be formulated as
\begin{eqnarray*}
&&\sum_{i=1}^k \min_{1\leq q\leq r}\left\{x^\top A_q\hat z_i+x^\top\bar a_q-b_q\right\}\geq \theta\quad \forall \hat z=[\hat z_1,\ldots,\hat z_k]^\top\in L_{k,Q}\\
&&\sum_{i=1}^{k-1} \min_{1\leq q\leq r}\left\{x^\top A_qz_i+x^\top\bar a_q-b_q\right\}\leq\theta\\
&& z=[z_1,\ldots,z_{k-1}]^\top\in L_{k-1,Q}\\
&&\theta\in\RR,
\end{eqnarray*}
where we have introduced variable $\theta\in\RR$ for simplicity.
From Proposition~\ref{jointsep}, the first set of constraints is easy to separate. To represent the second set of constraints, we include additional variables $w\in\{0,1\}^{(k-1)\times r}$ and $\eta\in\RR^{k-1}$, yielding
\begin{eqnarray*}
& \min& c^\top x \\
& \textrm{s.t.}&\sum_{i=1}^k \min_{1\leq q\leq r}\left\{x^\top A_q\hat z_i+x^\top\bar a_q-b_q\right\}\geq \theta\quad \forall \hat z=[\hat z_1,\ldots,\hat z_k]^\top\in L_{k,Q}\\
&&\sum_{i=1}^{k-1}\eta_i\leq\theta\\
&&x^\top A_qz_i+x^\top\bar a_q-b_q\leq \eta_i+M_qw_{iq}\quad 1\leq i<k,\ 1\leq q\leq r\\
&&\sum_{q=1}^r w_{iq}\leq r-1\quad 1\leq i< k\\
&& x\in X\\
&& z=[z_1,\ldots,z_{k-1}]^\top\in L_{k-1,Q}\\
&&w\in\{0,1\}^{k\times r}\\
&&\eta\in\RR^k\\
&&\theta\in\RR.
\end{eqnarray*}

\subsection{Integral parametrization}
Now, suppose that $\mathcal{A}$ is parametrized by the integral vectors within the rectangle $[0,u]^m$ for some $u\in\ZZ_+$. Again, finding the $k$ best solutions to $\min\{x^\top(\bar a+ Az):\ z\in\{0,\ldots,u\}^m\}$ can be done in polynomial time \cite{angulo2014forbidden} and $\bar\sigma_k(x)$ can be computed as
\begin{eqnarray*}
 \bar\sigma_k(x)=&\min&\frac{1}{k}x^\top A y+x^\top\bar a\\
 &\textrm{s.t.}&\sum_{i=1}^kz_i=y\\
 &&z_i\prec z_j\quad i< j\\
 &&z_i\in\{0,\ldots,u\}^m \quad 1\leq i\leq k.
\end{eqnarray*}
Let $L^u_{k,m}$ be the set of $k\times m$ integral matrices with rows in increasing lexicographical order and having entries in $\{0,\ldots,u\}$. In \cite{angulo2019matrices}, extended formulations for $\conv(L^u_{k,m})$ of size $\mathcal O(k^3m)$, if $u\geq k-1$, and $\mathcal O(uk^3m)$, if $u<k-1$, are obtained via dynamic programming.

Let $Y^u_{k,m}:=\left\{y\in\ZZ_+^m:\ y=\sum_{i=1}^k z_i\ \textrm{for some}\ [z_1,\ldots,z_k]^\top\in L^u_{k,m}\right\}$. Along the lines of Theorem~\ref{ef}, we can build an extended formulation for $\conv(Y^u_{k,m})$ of size $\mathcal O(k^2m)$ or $\mathcal O(uk^2m)$, depending on whether $u\geq k-1$ or not, and then apply LP duality to obtain a compact representation of the constraint $\bar\sigma_k(x)\geq b$.


\section{Computational experiments}\label{experiments}

We compare the standard IP formulation $\ipstd$ and our new formulation $\ipnew$ on problems of the form
\begin{eqnarray*}
&\max&\EE[a]^\top x\\
& \textrm{s.t.}& a^\top x\geq b\ \textrm{for all but at most } k-1\ \textrm{vectors } a\in\mathcal A\\
&&\ones^\top x\leq n/2\\
&& x\in \{0,1\}^n,
\end{eqnarray*}
where $\mathcal A\subseteq\RR^n$ is as in Example 2, that is, we take $Q=[0,1]^n$ and given $l\in\RR^n_-$ and $g\in\RR^n_+$, we set $a=l+\diag(g-l)z$ for $z\in\{0,1\}^n$. For each $n\in\{12,14,16,18\}$, we generate 10 instances of $g$ and $l$, where $g_j$ and $l_j$ are drawn uniformly from $\{1,\ldots,40\}$ and $\{-\left\lceil h_j/2\right\rceil,\ldots,-1\}$, respectively. For each instance, we have $\EE[a]=(g+l)/2\geq \zeros$ and we set $b=-0.2\cdot\ones^\top\EE[a]$. Finally, for each instance, we formulate and solve the problem for $k\in\{10,20,30,40,50\}$ with $\ipstd$ and $\ipnew$.

Our implementation uses IBM CPLEX 12.8 as IP solver. The experiments were run single-threaded on a cluster with 15 nodes equipped with two Intel E5-2470 8-cores processors. In Tables~\ref{n12}--\ref{n18} below we report the CPU time, in seconds, to reach optimality within the default solver tolerance with formulations $\ipstd$ and $\ipnew$.

\begin{table}[h!]
\caption{Time to optimality in seconds, $n=12$.}
\begin{center}
\begin{tabular}{crrrrrrrrrr}
	&\multicolumn{2}{c}{$k=10$}&	\multicolumn{2}{c}{$k=20$}	&\multicolumn{2}{c}{$k=30$}	&\multicolumn{2}{c}{$k=40$}	&\multicolumn{2}{c}{$k=50$}\\
\midrule
Instance \#	&	Std	&	New	&	Std	&	New	&	Std	&	New	&	Std	&	New	&	Std	&	New\\
\cmidrule(lr){1-1}		\cmidrule(lr){2-3}	\cmidrule(lr){4-5}	\cmidrule(lr){6-7}	\cmidrule(lr){8-9}	\cmidrule(lr){10-11}
1       & 2.3 & 0.3 & 2.6 & 3.1 & 5.3 & 12.4 & 6.6  & 40.5 & 21.5 & 133.2 \\
2       & 2.2 & 0.3 & 4.0 & 3.0 & 5.8 & 11.2 & 16.5 & 40.5 & 63.5 & 161.0 \\
3       & 1.2 & 0.2 & 2.4 & 1.2 & 2.3 & 7.6  & 3.8  & 26.2 & 5.4  & 78.3  \\
4       & 0.3 & 0.0 & 0.3 & 0.3 & 0.3 & 0.7  & 0.3  & 1.4  & 0.3  & 3.2   \\
5       & 2.3 & 0.3 & 3.0 & 3.6 & 5.2 & 14.7 & 8.7  & 57.6 & 8.0  & 212.1 \\
6       & 1.7 & 0.3 & 2.8 & 1.9 & 4.1 & 5.1  & 5.5  & 18.3 & 7.4  & 67.3  \\
7       & 0.4 & 0.1 & 0.4 & 0.3 & 0.4 & 2.6  & 0.5  & 2.7  & 0.5  & 7.5   \\
8       & 3.1 & 0.4 & 3.9 & 3.5 & 5.6 & 13.6 & 7.0  & 60.9 & 22.9 & 241.0 \\
9       & 2.9 & 0.4 & 3.5 & 3.8 & 4.0 & 15.5 & 6.8  & 62.4 & 8.6  & 249.3 \\
10      & 2.3 & 0.3 & 3.5 & 3.7 & 4.3 & 12.5 & 5.4  & 65.2 & 6.1  & 207.6 \\
\cmidrule(lr){1-1}		\cmidrule(lr){2-3}	\cmidrule(lr){4-5}	\cmidrule(lr){6-7}	\cmidrule(lr){8-9}	\cmidrule(lr){10-11}
Average & 1.9 & 0.3 & 2.6 & 2.4 & 3.7 & 9.6  & 6.1  & 37.6 & 14.4 & 136.1
\end{tabular}
\end{center}
\label{n12}
\end{table}

\begin{table}[h!]
\caption{Time to optimality in seconds, $n=14$.}
\begin{center}
\begin{tabular}{crrrrrrrrrr}
	&\multicolumn{2}{c}{$k=10$}&	\multicolumn{2}{c}{$k=20$}	&\multicolumn{2}{c}{$k=30$}	&\multicolumn{2}{c}{$k=40$}	&\multicolumn{2}{c}{$k=50$}\\
\midrule
Instance \#	&	Std	&	New	&	Std	&	New	&	Std	&	New	&	Std	&	New	&	Std	&	New\\
\cmidrule(lr){1-1}		\cmidrule(lr){2-3}	\cmidrule(lr){4-5}	\cmidrule(lr){6-7}	\cmidrule(lr){8-9}	\cmidrule(lr){10-11}
1       & 17.2 & 0.4 & 22.9 & 5.1 & 27.5 & 24.1 & 34.6 & 153.3 & 37.5  & 398.9 \\
2       & 7.1  & 0.2 & 6.1  & 1.6 & 5.8  & 7.4  & 7.8  & 11.0  & 8.0   & 46.9  \\
3       & 7.7  & 0.4 & 11.7 & 5.8 & 18.3 & 25.8 & 22.0 & 138.7 & 36.9  & 378.4 \\
4       & 15.0 & 0.4 & 20.6 & 4.6 & 29.1 & 21.6 & 45.8 & 71.7  & 53.0  & 258.3 \\
5       & 10.3 & 0.5 & 17.0 & 6.3 & 21.0 & 26.5 & 36.3 & 119.5 & 57.2  & 425.9 \\
6       & 12.9 & 0.3 & 14.9 & 2.9 & 15.7 & 10.5 & 22.2 & 49.6  & 34.0  & 131.8 \\
7       & 15.1 & 0.3 & 18.7 & 6.9 & 30.9 & 33.9 & 36.2 & 151.7 & 33.1  & 321.4 \\
8       & 12.2 & 0.4 & 16.0 & 5.1 & 19.7 & 18.6 & 25.4 & 102.1 & 31.4  & 253.1 \\
9       & 12.4 & 0.3 & 15.6 & 5.7 & 22.1 & 26.3 & 20.4 & 128.4 & 27.0  & 345.5 \\
10      & 15.2 & 0.5 & 22.6 & 6.8 & 44.7 & 28.9 & 69.2 & 145.5 & 123.9 & 391.4 \\
\cmidrule(lr){1-1}		\cmidrule(lr){2-3}	\cmidrule(lr){4-5}	\cmidrule(lr){6-7}	\cmidrule(lr){8-9}	\cmidrule(lr){10-11}
Average & 12.5 & 0.4 & 16.6 & 5.1 & 23.5 & 22.4 & 32.0 & 107.1 & 44.2  & 295.1
\end{tabular}
\end{center}
\label{n14}
\end{table}

\begin{table}[h!]
\caption{Time to optimality in seconds, $n=16$.}
\begin{center}
\begin{tabular}{crrrrrrrrrr}
	&\multicolumn{2}{c}{$k=10$}&	\multicolumn{2}{c}{$k=20$}	&\multicolumn{2}{c}{$k=30$}	&\multicolumn{2}{c}{$k=40$}	&\multicolumn{2}{c}{$k=50$}\\
\midrule
Instance \#	&	Std	&	New	&	Std	&	New	&	Std	&	New	&	Std	&	New	&	Std	&	New\\
\cmidrule(lr){1-1}		\cmidrule(lr){2-3}	\cmidrule(lr){4-5}	\cmidrule(lr){6-7}	\cmidrule(lr){8-9}	\cmidrule(lr){10-11}
1       & 94.6  & 0.3 & 121.2 & 3.3  & 146.5 & 12.8 & 189.6 & 77.7  & 240.6 & 252.1 \\
2       & 48.9  & 0.2 & 54.9  & 2.1  & 66.6  & 13.7 & 69.9  & 39.2  & 73.6  & 148.2 \\
3       & 111.6 & 0.7 & 130.4 & 4.3  & 159.7 & 28.8 & 222.0 & 127.6 & 281.1 & 415.5 \\
4       & 48.6  & 0.3 & 62.7  & 2.7  & 63.9  & 8.5  & 78.7  & 43.5  & 89.3  & 102.6 \\
5       & 136.0 & 0.5 & 177.9 & 7.4  & 232.7 & 36.9 & 277.2 & 234.3 & 392.3 & 530.2 \\
6       & 103.7 & 0.5 & 145.5 & 8.3  & 179.7 & 32.3 & 215.4 & 153.1 & 302.6 & 440.5 \\
7       & 87.3  & 0.4 & 127.9 & 7.5  & 168.0 & 28.3 & 322.7 & 122.1 & 277.5 & 524.9 \\
8       & 74.8  & 0.7 & 106.0 & 10.2 & 124.5 & 65.5 & 181.2 & 282.9 & 214.2 & 873.7 \\
9       & 132.2 & 0.9 & 160.8 & 9.6  & 202.1 & 32.9 & 281.2 & 184.1 & 353.5 & 635.3 \\
10      & 133.1 & 0.8 & 187.4 & 8.9  & 242.7 & 38.7 & 327.0 & 260.9 & 444.8 & 841.9 \\
\cmidrule(lr){1-1}		\cmidrule(lr){2-3}	\cmidrule(lr){4-5}	\cmidrule(lr){6-7}	\cmidrule(lr){8-9}	\cmidrule(lr){10-11}
Average & 97.1  & 0.5 & 127.5 & 6.4  & 158.6 & 29.8 & 216.5 & 152.5 & 266.9 & 476.5
\end{tabular}
\end{center}
\label{n16}
\end{table}

\begin{table}[h!]
\caption{Time to optimality in seconds, $n=18$.}
\begin{center}
\begin{tabular}{crrrrrrrrrr}
	&\multicolumn{2}{c}{$k=10$}&	\multicolumn{2}{c}{$k=20$}	&\multicolumn{2}{c}{$k=30$}	&\multicolumn{2}{c}{$k=40$}	&\multicolumn{2}{c}{$k=50$}\\
\midrule
Instance \#	&	Std	&	New	&	Std	&	New	&	Std	&	New	&	Std	&	New	&	Std	&	New\\
\cmidrule(lr){1-1}		\cmidrule(lr){2-3}	\cmidrule(lr){4-5}	\cmidrule(lr){6-7}	\cmidrule(lr){8-9}	\cmidrule(lr){10-11}
1       & 4319.2 & 0.8 & 4476.0 & 12.3 & 4686.1 & 88.3 & 4830.5 & 411.8 & 5004.5 & 1281.6 \\
2       & 2635.0 & 0.4 & 2794.6 & 3.7  & 2961.4 & 12.9 & 3101.8 & 82.3  & 3371.4 & 166.8  \\
3       & 470.3  & 0.4 & 568.3  & 4.6  & 670.3  & 22.6 & 735.2  & 63.5  & 820.2  & 261.9  \\
4       & 295.0  & 0.5 & 358.8  & 2.3  & 428.4  & 8.6  & 503.6  & 33.3  & 586.4  & 91.3   \\
5       & 2112.9 & 0.7 & 2455.2 & 12.8 & 2754.9 & 71.3 & 2766.4 & 529.5 & 3016.4 & 1705.6 \\
6       & 1702.3 & 0.6 & 1797.3 & 8.5  & 1959.7 & 45.6 & 2052.8 & 340.7 & 2646.3 & 819.5  \\
7       & 220.2  & 0.6 & 294.4  & 8.0  & 1973.2 & 45.4 & 422.7  & 290.5 & 2134.8 & 711.8  \\
8       & 1200.9 & 0.3 & 1323.1 & 4.1  & 1418.3 & 50.8 & 1542.1 & 189.2 & 1614.2 & 383.2  \\
9       & 1585.5 & 0.5 & 1715.4 & 9.6  & 1742.4 & 55.3 & 1989.4 & 531.0 & 2169.8 & 1371.9 \\
10      & 520.7  & 0.6 & 638.2  & 5.8  & 795.3  & 37.3 & 1123.7 & 214.6 & 1574.7 & 565.8  \\
\cmidrule(lr){1-1}		\cmidrule(lr){2-3}	\cmidrule(lr){4-5}	\cmidrule(lr){6-7}	\cmidrule(lr){8-9}	\cmidrule(lr){10-11}
Average & 1506.2 & 0.5 & 1642.1 & 7.2  & 1939.0 & 43.8 & 1906.8 & 268.6 & 2293.9 & 735.9 
\end{tabular}
\end{center}
\label{n18}
\end{table}

Recall that $\ipstd$ has $\mathcal O(2^n)$ binary variables and constraints, since $|\mathcal A|=2^n$, while $\ipnew$ has $\mathcal O(n)$ binary variables and $\mathcal O(k^2n)$ continuous variables and constraints. We see that for both formulations, solving times increase with $n$ and $k$, but at different rates. For $\ipstd$, the growth as a function of $k$ is rather mild, while as a function of $n$ it is much more pronounced. For $\ipnew$, the opposite happens, as the increase with $k$ is much steeper than with $n$.

We also observe that for small $k$, $\ipnew$ outperforms $\ipstd$. For $n=12$ and $n=14$, both formulations become comparable for $k=20$ and $k=30$, respectively, while $\ipstd$ outperforms $\ipnew$ for larger $k$. For $n=16$, the figure reverses only for $k=50$, while for $n=18$, $\ipstd$ is not able to compete with $\ipnew$ for any value of $k$.

In light of these results, we conclude that for problems of the form $(\textrm{P}_{k-1})$ with large $|\mathcal A|$ and small $k$, $\ipnew$ might preferable to $\ipstd$.

\section{Concluding remarks}\label{conclusions}
In this work, we present novel formulations for a class of stochastic problems with exponentially many scenarios parametrized by the vertices of a polytope. This approach introduces a modest number of additional variables, and it is shown to outperform a standard IP formulation when the failure tolerance is small. 

We want to highlight the fact that our compact formulations can be readily given to an off--the--shelf solver without the need of an intricate preprocessing or implementation. Formulations that include a large family of inequalities can be tackled with the built--in functions of state--of--the--art solvers. We hope that these characteristics broaden the applicability of the proposed formulations.

A natural question to further address is whether the approach can be extended to non--uniform discrete distributions, to either derive alternative formulations or to provide a fast mechanism to provide strong bounds. Also, objective functions or constraints that consider the expectation operator under similar settings might be considered too.

\section*{Acknowledgments}
The author is grateful to Diego Mor\'an for commenting on an earlier version of this work.

\bibliographystyle{amsplain} 
\bibliography{stochexp}

\end{document}